\def\R{\mathbb{R}}
\def\vv<#1>{\langle#1\rangle}
\def\1{\mathbf{1}}
\def\bA{\mathbf{A}}
\def\XXint#1#2{\setbox0=\hbox{$#1{#2}{\int}$}{#2}\kern-.5\wd0 }
\def\XXint#1#2#3{{\setbox0=\hbox{$#1{#2#3}{\int}$}
     \vcenter{\hbox{$#2#3$}}\kern-.5\wd0}}
\def\vv<#1>{{\left\langle#1\right\rangle}}
\def\sgn{{\rm sgn}}
\newtheorem{thm}{Theorem}[section]
\newtheorem{lem}{Lemma}[section]
\theoremstyle{definition}
\theoremstyle{remark}
\numberwithin{equation}{section}
\begin{document}
\title{Uniqueness of exterior differentiation on locally finite graphs}

\author{Yongjie Shi$^1$}
\address{Department of Mathematics, Shantou University, Shantou, Guangdong, 515063, China}
\email{yjshi@stu.edu.cn}
\author{Chengjie Yu$^2$}
\address{Department of Mathematics, Shantou University, Shantou, Guangdong, 515063, China}
\email{cjyu@stu.edu.cn}
\thanks{$^1$Research partially supported by NSF of China with contract no. 11701355. }
\thanks{$^2$Research partially supported by NSF of China with contract no. 11571215.}
\renewcommand{\subjclassname}{%
  \textup{2010} Mathematics Subject Classification}
\subjclass[2010]{Primary 05C50; Secondary 39A12}
\date{}
\keywords{differential form, exterior differentiation, graph}
\begin{abstract}
In this short note, we prove the uniqueness of exterior differentiation on locally finite graphs.
\end{abstract}
\maketitle\markboth{Shi \& Yu}{Uniqueness of exterior differentiation}
\section{Introduction}
Let $M^n$ be a smooth manifold and $A^r(M)$ be the space of smooth exterior differential $r$-forms on $M$. Let $A^*(M)=\bigoplus_{r=0}^nA^r(M)$. It is a classical conclusion (see \cite{CCL}) that there is a unique operator $d:A^*(M)\to A^*(M)$ such that
\begin{enumerate}
\item $d$ is linear;
\item $d(A^r(M))\subset A^{r+1}(M)$;
\item $d(\alpha\wedge\beta)=d\alpha\wedge\beta+(-1)^{\deg\alpha}\alpha\wedge d\beta$ for any $\alpha,\beta\in A^*(M)$;
\item $d^2=0$;
\item $df(X)=X(f)$ for any smooth function $f$ and smooth vector field $X$.
\end{enumerate}

In \cite{SY}, by imitating theories in algebraic topology (\cite{Ha}) and Hodge theory for simplicial complexes (\cite{DHLM,Do}), we introduced exterior differential calculus on graphs. By comparing the theory with that of manifolds, there is a natural question about the uniqueness of exterior differentiation on graphs. In this short note, we answer this question by confirming the uniqueness of exterior differentiation on locally finite graphs. In this paper, for a graph, we mean a simple graph. A graph is said to be locally finite if the degree of each vertex is finite. We are still not sure if the conclusion remains true for graphs that are not locally finite in general.

Before stating the main result, let's recall some preliminaries in exterior differential calculus on graphs.

Let $G$ be a graph, and $V(G)$ and $E(G)$ be its sets of vertices and edges respectively. A $k$-clique of $G$ is a subset $\{v_1,v_2,\cdots, v_k\}$  of $V(G)$ with $k$ distinct elements such that $\{v_i,v_j\}\in E(G)$ for any $1\leq i<j\leq k$. The collection of all $k$-cliques of $G$ is denoted as $C_k(G)$. A $k$-tensor of $G$ is a function
$$f:\underbrace{V(G)\times V(G)\times\cdots\times V(G)}_{k+1}\to \R$$
such that  $f(v_0,v_1,\cdots,v_k)=0$ whenever $\{v_0,v_1,\cdots,v_k\}\not\in C_{k+1}(G)$. A $k$-form of $G$ is a $k$-tensor that is skew symmetric with respect to all of its variables. The collection of all $k$-forms on  $G$ is denoted as $A^k(G)$. Moreover, let $A^*(G)=\bigoplus_{k=0}^\infty A^k(G)$ which is the space of all differential forms on $G$. The tensor product $\alpha\otimes\beta$ of a $r$-tensor $\alpha$ and a $s$-tensor $\beta$ is a $(r+s)$-tensor defined as
\begin{equation}
\alpha\otimes\beta(v_0,v_1,\cdots,v_{r+s})=\alpha(v_0,v_1,\cdots,v_r)\beta(v_r,v_{r+1},\cdots,v_{r+s})
\end{equation}
for any $\{v_0,v_1,\cdots, v_{r+s}\}\in C_{r+s+1}(G)$. The skew symmetrization of a $r$-tensor $\alpha$ is defined as
\begin{equation}
\bA(\alpha)(v_0,v_1,\cdots,v_r)=\frac{1}{(r+1)!}\sum_{\sigma\in S_{r+1}}\sgn(\sigma)\cdot \alpha(v_{\sigma(0)},v_{\sigma(1)},\cdots,v_{\sigma(r)})
\end{equation}
where $S_{r+1}$ is the group of permutations on $\{0,1,2,\cdots,r\}$. Finally, the wedge product $\alpha\wedge\beta$ of a $r$-form $\alpha$ and a $s$-form $\beta$ is a $(r+s)$-form defined as
\begin{equation}
\alpha\wedge\beta=\bA(\alpha\otimes\beta).
\end{equation}
It was shown in \cite{SY} that the wedge product defined above for graphs has similar properties as those for differential manifolds:
\begin{enumerate}
\item For any $\alpha\in A^r(G)$ and $\beta\in A^s(G)$, $$\alpha\wedge\beta=(-1)^{rs}\beta\wedge\alpha.$$
\item For any $\alpha,\beta,\gamma\in A^*(G)$ with $d\alpha=d\beta=d\gamma=0$, $$(\alpha\wedge\beta)\wedge\gamma=\alpha\wedge(\beta\wedge\gamma).$$
\end{enumerate}
The difference here is that associativity for wedge product on graphs is only valid for closed forms. This is the same as that in algebraic topology and Hodge theory for simplicial complexes.

For a $r$-form $\alpha$, similar with the definition of boundary operator in algebraic topology, the exterior differential $d\alpha$ of $\alpha$ is a $r+1$-form defined as

\begin{equation}
\begin{split}
d\alpha(v_0,v_1,\cdots,v_{r+1})
=\sum_{i=0}^{r+1}(-1)^i\alpha(v_0,v_1,\cdots,\hat v_i,\cdots, v_{r+1}).
\end{split}
\end{equation}
for any $\{v_0,v_1,\cdots,v_{r+1}\}\in C_{r+2}(G)$. In \cite{SY}, we have shown that similar properties for $d$ with those for differential manifolds remain true for graphs:
\begin{enumerate}
\item $d^2=0$.
\item For any $\alpha\in A^r(G)$ and $\beta\in A^s(G)$,
$$d(\alpha\wedge\beta)=d\alpha\wedge\beta+(-1)^r\alpha\wedge d\beta.$$
\end{enumerate}
The main result of this paper says that the exterior differential operator given above is the only operator on graphs satisfying the properties above which extends the similar result for differential manifolds to the discrete case.
\begin{thm}\label{thm-uniqueness}
Let $G$ be a locally finite graph. Let $\delta:A^*(G)\to A^*(G)$ be a linear operator such that
\begin{enumerate}
\item $\delta(A^r(G))\subset A^{r+1}(G)$.
\item $\delta^2=0$.
\item For any $\alpha\in A^r(G)$ and $\beta\in A^s(G)$,
$$\delta(\alpha\wedge\beta)=\delta\alpha\wedge\beta+(-1)^r\alpha\wedge\delta\beta.$$
\item $\delta f=df$ for any $f\in A^0(G)$ .
\end{enumerate}
Then, $\delta=d$.
\end{thm}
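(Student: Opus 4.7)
The strategy is to study $D := \delta - d$ and show it vanishes. The hypotheses on $\delta$ together with the corresponding properties of $d$ make $D$ a linear, degree-raising operator that vanishes on $A^0(G)$ and satisfies the graded Leibniz rule. Since $D(df) = \delta^2 f - d^2 f = 0$ for every $f \in A^0(G)$, a straightforward induction using Leibniz gives $D(f\cdot dg_1\wedge\cdots\wedge dg_r) = 0$ for any $0$-forms $f, g_1, \ldots, g_r$; thus $\delta$ and $d$ agree on every form of this type.

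The next step is to express every elementary $r$-form in this manner. For an $(r+1)$-clique $\sigma = \{v_0, \ldots, v_r\}$ (with any chosen ordering), let $\omega_\sigma \in A^r(G)$ be the form sending $(v_{\pi(0)}, \ldots, v_{\pi(r)})$ to $\sgn(\pi)$ and vanishing on all other cliques. The key tool is the pointwise identity
\[
(f \wedge \eta)(w_0, \ldots, w_p) \;=\; \frac{1}{p+1}\Bigl(\sum_{i=0}^p f(w_i)\Bigr)\eta(w_0, \ldots, w_p),
\]
valid for any $0$-form $f$ and $p$-form $\eta$, which follows directly from the definitions of $\otimes$ and $\bA$ together with the skew-symmetry of $\eta$. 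This implies $\chi_{v_0}\wedge d\chi_{v_1}\wedge\cdots\wedge d\chi_{v_r}$ is supported on $\sigma$ alone; and a short induction---writing $d\chi_{v_1}\wedge\cdots\wedge d\chi_{v_r} = d(\chi_{v_1}\wedge d\chi_{v_2}\wedge\cdots\wedge d\chi_{v_r})$ via Leibniz and $d^2 = 0$---shows its value at $(v_0, \ldots, v_r)$ equals $\tfrac{1}{(r+1)!}$. Hence $\omega_\sigma = (r+1)!\,\chi_{v_0}\wedge d\chi_{v_1}\wedge\cdots\wedge d\chi_{v_r}$, and therefore $\delta\omega_\sigma = d\omega_\sigma$.

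To conclude $\delta = d$, I fix $\alpha \in A^r(G)$ and an $(r+2)$-clique $\tau = \{v_0, \ldots, v_{r+1}\}$, and aim at $\delta\alpha(\tau) = d\alpha(\tau)$. For each subset $S \subseteq \tau$, define $\alpha_S \in A^r(G)$ to be the restriction of $\alpha$ to the $(r+1)$-cliques $\sigma$ with $\sigma \cap \tau = S$. Each clique has a unique intersection with the finite set $\tau$, so this yields a \emph{finite} decomposition $\alpha = \sum_{S \subseteq \tau} \alpha_S$, and linearity gives $\delta\alpha(\tau) = \sum_S \delta\alpha_S(\tau)$. For $|S| = r+1$ the summand $\alpha_S$ is supported on the single clique $S$ and is therefore a scalar multiple of $\omega_S$, so $\delta\alpha_S = d\alpha_S$; these $r+2$ facet contributions sum to exactly $d\alpha(\tau)$. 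The remaining task is to show $\delta\alpha_S(\tau) = 0$ whenever $|S| \leq r$ (noting $|S| = r+2$ cannot occur since $|\sigma| = r+1$).

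For those small $S$, introduce the cut-off $\psi_S := 1 - \chi_{\tau \setminus S} \in A^0(G)$, equal to $1$ on $S \cup (V(G)\setminus\tau)$ and $0$ on $\tau \setminus S$. Every vertex of every clique supporting $\alpha_S$ lies in $S \cup (V(G)\setminus\tau)$, so the averaging identity gives $\psi_S \wedge \alpha_S = \alpha_S$; Leibniz then yields
\[
\delta\alpha_S \;=\; d\psi_S \wedge \alpha_S + \psi_S \wedge \delta\alpha_S.
\]
Evaluated at $\tau$: each term of $\bA(d\psi_S \otimes \alpha_S)(\tau)$ contains a factor $\alpha_S$ at an $(r+1)$-sub-clique of $\tau$, whose intersection with $\tau$ has size $r+1 \neq |S|$, forcing $\alpha_S$ to vanish; so $(d\psi_S \wedge \alpha_S)(\tau) = 0$. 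Meanwhile averaging gives $(\psi_S \wedge \delta\alpha_S)(\tau) = \tfrac{|S|}{r+2}\,\delta\alpha_S(\tau)$. Therefore $\bigl(1 - \tfrac{|S|}{r+2}\bigr)\delta\alpha_S(\tau) = 0$, and since $|S| \leq r < r+2$ we conclude $\delta\alpha_S(\tau) = 0$. The principal difficulty is this transition from the pointwise decomposition of $\alpha$ over all cliques---an infinite sum in general---to an operator identity: linearity of $\delta$ alone handles only finite sums, and the cut-off $\psi_S$ is the device that localizes $\delta\alpha_S(\tau)$ to a single finite combinatorial relation even when $\alpha_S$ has infinite support.
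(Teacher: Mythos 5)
Your proof is correct, and it takes a genuinely different route from the paper's. The paper expands $\alpha$ globally in ``coordinates'' as $\sum_{v_1,\cdots,v_k}\alpha_{v_1\cdots v_k}\wedge(d\chi^{v_1}\wedge\cdots\wedge d\chi^{v_k})$, an infinite sum over $k$-tuples of vertices; it then wedges with a cut-off $\rho$ that is $\equiv 1$ on the whole target clique (so $d\rho$ vanishes on pairs in the clique and the Leibniz error term drops out), and it needs local finiteness precisely to ensure that only finitely many coefficients $(\rho\wedge\alpha)_{v_1\cdots v_k}$ survive, so that linearity of $\delta$ can be applied term by term. You instead decompose $\alpha$ by intersection pattern with the fixed $(r+2)$-clique $\tau$ --- at most $2^{r+2}$ honest $r$-forms, finitely many regardless of the graph; the facet pieces $|S|=r+1$ are scalar multiples of $\omega_S=(r+1)!\,\chi^{v_0}\wedge(d\chi^{v_1}\wedge\cdots\wedge d\chi^{v_r})$, killed by your derivation $D=\delta-d$, while each small piece dies through the self-referential relation $\delta\alpha_S(\tau)=\tfrac{|S|}{r+2}\,\delta\alpha_S(\tau)$, obtained from $\psi_S\wedge\alpha_S=\alpha_S$ and the averaging identity (the paper's Lemma 2.1, which you re-derive); I verified the support computations, the identification of $\omega_S$, and the Leibniz steps, and the one delicate point --- that wedge is associative only among closed forms --- is implicitly safe in your argument because your multi-factor wedges involve only the closed forms $d\chi^{v}$, with the $0$-form attached by a single outer wedge. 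What each approach buys: the paper's mirrors the classical manifold proof almost verbatim, whereas yours, strikingly, never invokes local finiteness --- your decomposition is finite for any simple graph --- so taken at face value it proves the theorem for arbitrary simple graphs and thereby answers the question the introduction explicitly leaves open for graphs that are not locally finite; you should highlight this rather than bury it in the last sentence.
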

We will imitate the proof for differential manifolds. Let's recall the proof of the corresponding result  for differential manifolds. Let $\delta:A^*(M)\to A^*(M)$ be an operator satisfying the properties.  For any differential $r$-form $\alpha$ on a differential manifold, one can write $\alpha$ as
\begin{equation}
\begin{split}
\alpha=&\sum_{1\leq i_1<i_2<\cdots<i_r\leq n}\alpha_{i_1i_2\cdots i_r}(x)dx^{i_1}\wedge dx^{i_2}\wedge\cdots\wedge dx^{i_r}\\
=&\sum_{1\leq i_1<i_2<\cdots<i_r\leq n}\alpha_{i_1i_2\cdots i_r}(x)\delta x^{i_1}\wedge \delta x^{i_2}\wedge\cdots\wedge \delta x^{i_r}
\end{split}
\end{equation}
locally under a local coordinate $(x^1,x^2,\cdots,x^n)$, because $df=\delta f$ for any smooth function $f$ by assumption. By the linearity and that $\delta^2=0$ and the Leibnitz rule, one has
\begin{equation}
\begin{split}
\delta\alpha=&\sum_{1\leq i_1<i_2<\cdots<i_r\leq n}\delta\alpha_{i_1i_2\cdots i_r}\wedge dx^{i_1}\wedge dx^{i_2}\wedge\cdots\wedge dx^{i_r}\\
=&\sum_{1\leq i_1<i_2<\cdots<i_r\leq n}d\alpha_{i_1i_2\cdots i_r}\wedge dx^{i_1}\wedge dx^{i_2}\wedge\cdots\wedge dx^{i_r}\\
=&d\alpha
\end{split}
\end{equation}
because $df=\delta f$ for any smooth function $f$.

For the functions on a graph playing the role of local coordinate functions, we choose the characteristic function at each vertex of $G$ : for each $v\in V(G)$, let
\begin{equation}
\chi^v(x)=\left\{\begin{array}{ll}1&x=v\\
0&x\neq v.
\end{array}\right.
\end{equation}
The key ingredient in proving Theorem \ref{thm-uniqueness} is the following lemma essentially says that $A^0(G)$ and $\{d\chi^v\ |\ v\in V(G)\}$ generate $A^*(G)$.
\begin{lem}\label{lem-expansion}
Let $G$ be a graph and $\alpha$ be $k$-form on G with $k\geq 1$. Then,
\begin{equation}\label{eq-expansion}
\alpha=\sum_{v_1,v_2,\cdots,v_k\in V(G)}\alpha_{v_1v_2\cdots v_k}\wedge\left(d\chi^{v_1}\wedge\cdots\wedge d\chi^{v_k}\right)
\end{equation}
where $\alpha_{v_1v_2\cdots v_k}(x)=\alpha(x,v_1,v_2,\cdots,v_k)$ for any $x,v_1,\cdots,v_k\in V(G)$.
\end{lem}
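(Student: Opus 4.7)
Since a $k$-form on $G$ is determined by its values on $(k+1)$-cliques, the plan is to fix a clique $(u_0,u_1,\ldots,u_k)\in C_{k+1}(G)$ and verify that both sides of \eqref{eq-expansion} agree at it. Two preparatory identities will reduce the right-hand side to a purely combinatorial sum.

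First, for a $0$-form $f$ and a $k$-form $\omega$, unwinding the definition of $\bA$ and using the skew-symmetry of $\omega$ gives
\[
(f\wedge\omega)(u_0,\ldots,u_k)=\frac{1}{k+1}\Bigl(\sum_{j=0}^{k}f(u_j)\Bigr)\omega(u_0,\ldots,u_k).
\]
Second, because $d(d\chi^v)=0$, associativity of the wedge product on closed forms makes $d\chi^{v_1}\wedge\cdots\wedge d\chi^{v_k}$ unambiguous, and a short induction on $k$ (exploiting the usual compatibility $\bA(\bA(T)\otimes S)=\bA(T\otimes S)$) yields
\[
(d\chi^{v_1}\wedge\cdots\wedge d\chi^{v_k})(u_0,\ldots,u_k)=\frac{1}{(k+1)!}\sum_{\sigma\in S_{k+1}}\sgn(\sigma)\prod_{i=1}^{k}\bigl(\chi^{v_i}(u_{\sigma(i)})-\chi^{v_i}(u_{\sigma(i-1)})\bigr).
\]

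Plugging these into the right-hand side of \eqref{eq-expansion}, I would expand each factor $\chi^{v_i}(u_{\sigma(i)})-\chi^{v_i}(u_{\sigma(i-1)})$ as a signed sum of two terms and then collapse the sums over $v_1,\ldots,v_k$ using $\sum_v g(v)\chi^v(u)=g(u)$. What remains is a signed sum of values $\alpha(u_j,u_{\sigma(a_1)},\ldots,u_{\sigma(a_k)})$, indexed by $j\in\{0,\ldots,k\}$, $\sigma\in S_{k+1}$, and tuples $(a_1,\ldots,a_k)$ with $a_i\in\{i-1,i\}$, carrying the sign $\sgn(\sigma)(-1)^{k-|\epsilon|}$ where $\epsilon_i\in\{0,1\}$ records which of the two choices was made at position $i$.

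The decisive combinatorial step is to observe that, by the skew-symmetry of $\alpha$, only those tuples $(a_1,\ldots,a_k)$ with distinct entries contribute. An elementary argument shows that these are precisely the $k+1$ staircase patterns parametrized by a threshold $m\in\{0,\ldots,k\}$ with $a_i=i-1$ for $i\le m$ and $a_i=i$ for $i>m$; each such pattern also forces $j=\sigma(m)$ for the $\alpha$-value to be nonzero. For such a pattern, $|\epsilon|=k-m$, so the expansion contributes a sign $(-1)^m$, while permuting $u_{\sigma(m)}$ back into the first slot of $\alpha$ contributes the compensating sign $(-1)^m$ via skew-symmetry; the two cancel, and $\sgn(\sigma)^2=1$ then reduces the $\alpha$-value to $\alpha(u_0,\ldots,u_k)$. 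Summing over $\sigma\in S_{k+1}$, $m\in\{0,\ldots,k\}$, and the forced $j=\sigma(m)$ yields a factor $(k+1)\cdot(k+1)!$ that exactly cancels the prefactor $\frac{1}{(k+1)(k+1)!}$, leaving $\alpha(u_0,\ldots,u_k)$ on the nose. The main obstacle I foresee is the careful sign bookkeeping in this last step: proving that exactly the $k+1$ staircase patterns survive, and that the two sources of signs cancel term-by-term rather than conspiring in some more subtle way.
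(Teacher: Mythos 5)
Your proposal is correct and is essentially the paper's own argument: your first identity is exactly Lemma \ref{lem-wedge-product}, your antisymmetrization formula for $d\chi^{v_1}\wedge\cdots\wedge d\chi^{v_k}$ is the formula the proof of Lemma \ref{lem-wedge-product-2} starts from (a step the paper likewise takes as given from the definition of the wedge product for closed forms), and your staircase analysis is precisely the content of Lemma \ref{lem-wedge-product-2}, where the $k+1$ surviving threshold patterns appear as the terms $(-1)^i\chi^{v_1v_2\cdots v_k}(x_{\sigma(0)},\ldots,\widehat{x_{\sigma(i)}},\ldots,x_{\sigma(k)})$ and your sign $(-1)^m$ is the paper's $\sgn\mu=(-1)^i$. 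The only organizational difference is that the paper first evaluates the $d\chi$-wedge in closed form (obtaining $\sgn\tau/k!$), discarding the non-staircase terms via distinctness of the $v_i$, whereas you inline that computation and discard them via skew-symmetry of $\alpha$ after collapsing the sums over $v_1,\ldots,v_k$; your sign bookkeeping and the final count $(k+1)\cdot(k+1)!$ both check out.
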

Although the right hand side of \eqref{eq-expansion} is an infinite sum  when the graph is infinite, for any $x_0,x_1,\cdots, x_k\in V(G)$, the set
$$\left\{(v_1,v_2,\cdots,v_k)\ |\ \left(\alpha_{v_1v_2\cdots v_k}\wedge\left(d\chi^{v_1}\wedge\cdots\wedge d\chi^{v_k}\right)\right)(x_0,x_1,\cdots,x_k)\neq 0\right\}$$
is finite. So, only finite sum is involved in the right hand side of \eqref{eq-expansion} when applying it to $k+1$ given vertices.
\section{Proof of the main result}
First, we have following two simple lemmas.
\begin{lem}\label{lem-wedge-product}
Let $G$ be a graph, $f\in A^0(G)$ and $\alpha\in A^k(G)$. Then, for any $v_0,v_1,\cdots, v_k\in V(G)$,
\begin{equation}
(f\wedge\alpha)(v_0,v_1,\cdots,v_k)=\frac{1}{k+1}\left(\sum_{i=0}^{k}f(v_i)\right)\alpha(v_0,v_1,\cdots,v_k).
\end{equation}
\end{lem}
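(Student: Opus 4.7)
The plan is to prove this by a direct unpacking of the definitions of $\wedge$ and $\bA$, using the skew symmetry of $\alpha$ to collapse the sum over $S_{k+1}$ to a sum over the single value $\sigma(0)$.

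First I would write $f\wedge\alpha=\bA(f\otimes\alpha)$ and expand according to the definitions. Since $f$ is a $0$-tensor, the tensor product formula gives $(f\otimes\alpha)(v_0,v_1,\ldots,v_k)=f(v_0)\alpha(v_0,v_1,\ldots,v_k)$, so
\begin{equation*}
(f\wedge\alpha)(v_0,\ldots,v_k)=\frac{1}{(k+1)!}\sum_{\sigma\in S_{k+1}}\sgn(\sigma)\,f(v_{\sigma(0)})\,\alpha(v_{\sigma(0)},v_{\sigma(1)},\ldots,v_{\sigma(k)}).
\end{equation*}
Next I would invoke skew symmetry of $\alpha$ to write $\alpha(v_{\sigma(0)},\ldots,v_{\sigma(k)})=\sgn(\sigma)\,\alpha(v_0,\ldots,v_k)$, so that the two signs multiply to $1$ and $\alpha(v_0,\ldots,v_k)$ comes out as a common factor.

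At this point the problem reduces to evaluating $\sum_{\sigma\in S_{k+1}}f(v_{\sigma(0)})$. The final step is a simple counting observation: the value $\sigma(0)$ can be any of $0,1,\ldots,k$, and for each fixed $i$ there are exactly $k!$ permutations $\sigma\in S_{k+1}$ with $\sigma(0)=i$, so
\begin{equation*}
\sum_{\sigma\in S_{k+1}}f(v_{\sigma(0)})=k!\sum_{i=0}^{k}f(v_i).
\end{equation*}
Combining and using $k!/(k+1)!=1/(k+1)$ yields the claimed identity.

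There is no real obstacle here; the only thing to be careful about is the indexing convention in the definition of $\otimes$, namely that the $0$-tensor $f$ is evaluated at $v_0$ (not at $v_k$ or any other variable), so that the only information extracted from a permutation $\sigma$ is the image $\sigma(0)$. Everything else is bookkeeping.
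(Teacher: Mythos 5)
Your proposal is correct and matches the paper's own proof essentially line for line: both expand $f\wedge\alpha=\bA(f\otimes\alpha)$, use skew symmetry of $\alpha$ to cancel $\sgn(\sigma)$, and count the $k!$ permutations with $\sigma(0)=i$ to obtain the factor $\frac{1}{k+1}$. Your remark about the indexing convention in $\otimes$ (that $f$ is evaluated at $v_{\sigma(0)}$) is exactly the right point of care, and nothing further is needed.
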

\begin{proof}
By the definition of wedge product on graphs,
\begin{equation}
\begin{split}
&(f\wedge\alpha)(v_0,v_1,\cdots,v_k)\\
=&\bA(f\otimes\alpha)(v_0,v_1,\cdots,v_k)\\
=&\frac{1}{(k+1)!}\sum_{\sigma\in S_{k+1}}\sgn\sigma f(v_{\sigma(0)})\alpha(v_{\sigma(0)},v_{\sigma(1)},\cdots,v_{\sigma(k)})\\
=&\frac{1}{(k+1)!}\sum_{\sigma\in S_{k+1}} f(v_{\sigma(0)})\alpha(v_{0},v_{1},\cdots,v_{k})\\
=&\frac{1}{k+1}\left(\sum_{i=0}^{k}f(v_i)\right)\alpha(v_0,v_1,\cdots,v_k).
\end{split}
\end{equation}
\end{proof}
\begin{lem}\label{lem-wedge-product-2}
Let $G$ be a graph. Then, for any $v_1,v_2,\cdots,v_k\in V(G)$ and $x_0,x_1,\cdots,x_k\in V(G)$ with $\{x_0,x_1,\cdots,x_{k}\}\in C_{k+1}(G)$,
\begin{equation}
\begin{split}
&d\chi^{v_1}\wedge d\chi^{v_2}\wedge\cdots\wedge d\chi^{v_k}(x_0,x_1,\cdots,x_k)\\
=&\left\{\begin{array}{ll}\frac{\sgn\tau}{k!}&\tau\in S_{k+1}\ {\rm such\ that}\ v_i=x_{\tau(i)}\ {\rm for}\ i=1,2,\cdots,k\\
0&{\rm otherwise.}
\end{array}\right.
\end{split}
\end{equation}
\end{lem}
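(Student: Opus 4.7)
I propose to prove the lemma by induction on $k$. The base case $k=1$ follows immediately from the definition $d\chi^{v_1}(x_0,x_1)=\chi^{v_1}(x_1)-\chi^{v_1}(x_0)$ by a direct three-case check.

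For the inductive step, set $\omega=d\chi^{v_2}\wedge\cdots\wedge d\chi^{v_k}$. Since each $d\chi^{v_i}$ is closed and the wedge of closed forms is closed (by the Leibniz rule), $\omega$ is closed, so $d\chi^{v_1}\wedge\omega=d(\chi^{v_1}\wedge\omega)$. Using Lemma~\ref{lem-wedge-product} for the inner wedge and the definition of $d$ yields
\begin{equation*}
(d\chi^{v_1}\wedge\omega)(x_0,\cdots,x_k)=\frac{1}{k}\sum_{j=0}^{k}(-1)^{j}\Bigl(\sum_{i\neq j}\chi^{v_1}(x_i)\Bigr)\omega(x_0,\cdots,\hat x_j,\cdots,x_k),
\end{equation*}
and I would apply the inductive hypothesis to each $\omega$-factor. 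If $v_1\notin\{x_0,\cdots,x_k\}$ every inner coefficient vanishes; otherwise, writing $v_1=x_r$, the coefficient equals $[j\neq r]$.

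In the principal case where the permutation $\tau$ from the statement exists uniquely, the inductive hypothesis forces $\omega(x_0,\cdots,\hat x_j,\cdots,x_k)=0$ unless $j=\tau(0)$, and that single surviving term equals $\sgn(\mu)/(k-1)!$ for the permutation $\mu\in S_k$ produced by the inductive hypothesis. Matching the result with $\sgn(\tau)/k!$ then reduces to the sign identity $\sgn(\tau)=(-1)^{\tau(0)}\sgn(\mu)$, which I would establish by an inversion count on $\tau$: position~$0$ contributes exactly $\tau(0)$ inversions (the values $0,1,\cdots,\tau(0)-1$ all appear later in $\tau$), while the remaining inversions are preserved by the order-isomorphism $\{0,\cdots,k\}\setminus\{\tau(0)\}\to\{0,\cdots,k-1\}$ that identifies the tail $(\tau(1),\cdots,\tau(k))$ with $\mu$. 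The main obstacle is this sign bookkeeping; the degenerate configurations (repeated $v_i$'s, or $\{v_1,\cdots,v_k\}\not\subseteq\{x_0,\cdots,x_k\}$) are handled routinely, since either all surviving $\omega$-factors vanish by the inductive hypothesis, or exactly two $j$-terms survive and cancel, matching the vanishing of the right-hand side.
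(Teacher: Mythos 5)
Your proposal is correct, but it takes a genuinely different route from the paper's. The paper proves the lemma by one direct computation: it expands $d\chi^{v_1}\wedge\cdots\wedge d\chi^{v_k}$ as the skew-symmetrization of the tensor product, rewrites the product $\prod_{i=1}^k\bigl(\chi^{v_i}(x_{\sigma(i)})-\chi^{v_i}(x_{\sigma(i-1)})\bigr)$ as an alternating sum over an omitted index, and evaluates the resulting double sum over $\sigma\in S_{k+1}$ with the help of an auxiliary cyclic permutation of sign $(-1)^i$, so that all degenerate configurations are absorbed into the same calculation. You instead induct on $k$, peeling off the first factor via $d\chi^{v_1}\wedge\omega=d(\chi^{v_1}\wedge\omega)$ (legitimate, since $\omega$ is closed by $d^2=0$ and the Leibniz rule), and then combining Lemma~\ref{lem-wedge-product} with the definition of $d$; your sign identity $\sgn\tau=(-1)^{\tau(0)}\sgn\mu$ is the standard inversion count and is correct, and your asserted cancellation in the case $v_1=v_m$ ($m\geq 2$) does hold, because the two surviving terms equal $\sgn\tau_1/k!$ and $\sgn\tau_2/k!$ for permutations with value sequences $(j_1,j_2,\ldots)$ and $(j_2,j_1,\ldots)$ that differ by a transposition. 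Two points worth flagging: first, your argument needs the iterated wedge of the closed forms $d\chi^{v_i}$ to be unambiguous, i.e.\ associativity for closed forms as quoted from \cite{SY}, in order to peel off $d\chi^{v_1}$, whereas the paper's computation instead uses that this iterated wedge equals the full skew-symmetrization of the tensor product; second, in the principal case it is not quite true that the inductive hypothesis alone kills every $j\neq\tau(0)$ --- the factor $\omega(x_0,\ldots,\hat x_j,\ldots,x_k)$ is also nonzero at $j=\tau(1)=r$, and it is the coefficient $[j\neq r]$ that removes that term. What each approach buys: yours is more structural, reusing the Leibniz/closedness formalism already in play and isolating the combinatorics in a single standard sign lemma, at the price of a four-way case analysis; the paper's is definitionally self-contained and uniform across all cases, at the price of a heavier permutation manipulation.
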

\begin{proof}First, it is clear that if $v_1,v_2,\cdots,v_k$ are not distinct, then
$$d\chi^{v_1}\wedge d\chi^{v_2}\wedge\cdots\wedge d\chi^{v_k}=0.$$
Next, when $v_1,v_2,\cdots,v_k$ are distinct, by the definition of wedge product, we have
\begin{equation}\label{eq-wedge-product-1}
\begin{split}
&d\chi^{v_1}\wedge d\chi^{v_2}\wedge\cdots\wedge d\chi^{v_k}(x_0,x_1,\cdots,x_k)\\
=&\frac{1}{(k+1)!}\sum_{\sigma\in S_{k+1}}\sgn\sigma \prod_{i=1}^k(\chi^{v_i}(x_{\sigma(i)})-\chi^{v_i}(x_{\sigma(i-1)}))\\
=&\frac{1}{(k+1)!}\sum_{\sigma\in S_{k+1}}\sgn\sigma \sum_{i=0}^{k} (-1)^i\chi^{v_1v_2\cdots v_k}(x_{\sigma(0)},x_{\sigma(1)},\cdots,\widehat{ x_{\sigma(i)}},\cdots,x_{\sigma(k)})
\end{split}
\end{equation}
where $\chi^{v_1v_2\cdots v_k}(u_1,u_2,\cdots,u_k):=\chi^{v_1}(u_1)\cdots\chi^{v_k}(u_k)$. So, if there is some $v_i\not\in \{x_0,x_1,\cdots,x_k\}$, then
$$d\chi^{v_1}\wedge d\chi^{v_2}\wedge\cdots\wedge d\chi^{v_k}(x_0,x_1,\cdots,x_k)=0.$$
When $\{v_1,v_2,\cdots,v_k\}\subset \{x_0,x_1,\cdots,x_k\}$, suppose that
\begin{equation}
v_i=x_{\tau(i)}
\end{equation}
for $i=1,2,\cdots,k$ with $\tau\in S_{k+1}$. Then, by \eqref{eq-wedge-product-1},
\begin{equation}\label{eq-wedge-product-1}
\begin{split}
&d\chi^{v_1}\wedge d\chi^{v_2}\wedge\cdots\wedge d\chi^{v_k}(x_0,x_1,\cdots,x_k)\\
=&\frac{1}{(k+1)!}\sum_{\sigma\in S_{k+1}}\sgn\sigma \sum_{i=0}^{k} (-1)^i\chi^{x_{\tau(1)}x_{\tau(2)}\cdots x_{\tau(k)}}(x_{\sigma(\mu(1))},x_{\sigma(\mu(2))},\cdots,\cdots,x_{\sigma(\mu(k))})\\
=&\frac{1}{(k+1)!}\sum_{i=0}^{k}\sum_{\sigma\in S_{k+1}}\sgn(\sigma\mu) \chi^{x_{\tau(1)}x_{\tau(2)}\cdots x_{\tau(k)}}(x_{\sigma(\mu(1))},x_{\sigma(\mu(2))},\cdots,x_{\sigma(\mu(k))})\\
=&\frac{1}{k!}\sgn\tau.
\end{split}
\end{equation}
Here $\mu\in S_{k+1}$ is given by $\mu(0)=i,\mu(1)=0,\mu(2)=1,\cdots,\mu(i)=i-1,\mu(i+1)=i+1,\cdots,\mu(k)=k$, and it is clear that $\sgn \mu=(-1)^i$.
\end{proof}
We are now ready to prove Lemma \ref{lem-expansion}.
\begin{proof}By Lemma \ref{lem-wedge-product} and lemma \ref{lem-wedge-product-2}, for any $x_0,x_1,\cdots,x_k\in V(G)$ with $\{x_0,x_1,\cdots,x_k\}\in C_{k+1}(G)$,
\begin{equation}
\begin{split}
&\sum_{v_1,v_2,\cdots,v_k\in V(G)}\alpha_{v_1v_2\cdots v_k}\wedge\left(d\chi^{v_1}\wedge\cdots\wedge d\chi^{v_k}\right)(x_0,x_1,\cdots,x_k)\\
=&\frac{1}{k+1}\sum_{v_1,v_2,\cdots,v_k\in V(G)}\left(\sum_{i=0}^k\alpha(x_i,v_1,v_2,\cdots,v_k)\right)\left(d\chi^{v_1}\wedge\cdots\wedge d\chi^{v_k}\right)(x_0,x_1,\cdots,x_k)\\
=&\frac1{(k+1)!}\sum_{\tau\in S_{k+1}}\sum_{i=0}^k\sgn\tau\alpha(x_i,x_{\tau(1)},x_{\tau(2)},\cdots,x_{\tau(k)})\\
=&\alpha(x_0,x_1,\cdots,x_k).
\end{split}
\end{equation}
This completes the proof of the lemma.
\end{proof}
Because $$\sum_{v_1,v_2,\cdots,v_k\in V(G)}\alpha_{v_1v_2\cdots v_k}\wedge\left(d\chi^{v_1}\wedge\cdots\wedge d\chi^{v_k}\right)$$ is formally an infinite sum of $k$-forms when $G$ is an infinite graph, we can not interchange the order of $\delta$ and $\Sigma$ by using the linearity of $\delta$ in proving $\delta=d$. Similar to the case of differential manifolds, we first cut $\alpha$ off to ensure the summation is a finite sum.
\begin{lem}\label{lem-cut-off}
Let $G$ be a graph and $\delta$ be an operator satisfying the assumptions in Theorem \ref{thm-uniqueness}. For $\{x_0,x_1,\cdots,x_{k+1}\}\in C_{k+2}(G)$, let $\rho\in A^0(G)$ be such that $\rho(x_i)=1$ for $i=0,1,2,\cdots,k+1$ and $\rho(x)=0$ when $x\not\in \{x_0,x_1,\cdots,x_{k+1}\}$. Then,
\begin{equation}
\delta(\rho\wedge\alpha)(x_0,x_1,\cdots,x_{k+1})=\delta\alpha(x_0,x_1,\cdots,x_{k+1})
\end{equation}
for any $\alpha\in A^k(G)$.
\end{lem}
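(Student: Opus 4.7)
The plan is to apply the Leibniz rule (assumption (3)) to $\rho \wedge \alpha$, use assumption (4) to replace $\delta \rho$ by $d\rho$, and then check that the two resulting terms evaluate the right way on the clique $\{x_0,x_1,\dots,x_{k+1}\}$: one term should reproduce $\delta\alpha(x_0,\dots,x_{k+1})$, and the other should vanish.

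More concretely, since $\rho \in A^0(G)$, the Leibniz rule gives
\begin{equation*}
\delta(\rho \wedge \alpha) = \delta\rho \wedge \alpha + \rho \wedge \delta\alpha = d\rho \wedge \alpha + \rho \wedge \delta\alpha,
\end{equation*}
where the second equality uses assumption (4). I would then evaluate each summand at $(x_0,x_1,\dots,x_{k+1})$. For the term $\rho \wedge \delta\alpha$, since $\rho$ is a $0$-form and $\delta\alpha \in A^{k+1}(G)$, Lemma \ref{lem-wedge-product} yields
\begin{equation*}
(\rho \wedge \delta\alpha)(x_0,\dots,x_{k+1}) = \frac{1}{k+2}\left(\sum_{i=0}^{k+1}\rho(x_i)\right)\delta\alpha(x_0,\dots,x_{k+1}) = \delta\alpha(x_0,\dots,x_{k+1}),
\end{equation*}
using that $\rho(x_i)=1$ for all $i=0,1,\dots,k+1$.

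The only thing left to verify is that $(d\rho \wedge \alpha)(x_0,\dots,x_{k+1})=0$, which I expect to be the main (mild) obstacle. Expanding by the definition of the wedge product,
\begin{equation*}
(d\rho \wedge \alpha)(x_0,\dots,x_{k+1}) = \frac{1}{(k+2)!}\sum_{\sigma\in S_{k+2}}\sgn(\sigma)\, d\rho(x_{\sigma(0)},x_{\sigma(1)})\,\alpha(x_{\sigma(1)},\dots,x_{\sigma(k+1)}).
\end{equation*}
Since $d\rho(u,v)=\rho(v)-\rho(u)$ and $\rho\equiv 1$ on the clique $\{x_0,\dots,x_{k+1}\}$, every factor $d\rho(x_{\sigma(0)},x_{\sigma(1)})$ is zero, so the whole sum vanishes. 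Combining the two evaluations gives the lemma.

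The argument requires nothing beyond the hypotheses and the two preparatory lemmas, and the key observation — that $\rho$ being locally constant on the clique makes $d\rho$ vanish on every edge of the clique — is exactly the discrete analogue of the usual cutoff argument on a manifold, where one multiplies by a bump function that is identically $1$ on the coordinate patch of interest.
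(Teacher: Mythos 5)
Your proof is correct and follows essentially the same route as the paper: apply the Leibniz rule, use assumption (4) to identify $\delta\rho$ with $d\rho$, observe that $d\rho$ vanishes on every edge of the clique (so $d\rho\wedge\alpha$ evaluates to zero there), and invoke Lemma \ref{lem-wedge-product} with $\rho\equiv 1$ on the clique to recover $\delta\alpha(x_0,\dots,x_{k+1})$. Your only difference is expanding $(d\rho\wedge\alpha)(x_0,\dots,x_{k+1})$ explicitly via the skew-symmetrization, where the paper simply notes $\delta\rho(x_i,x_j)=\rho(x_j)-\rho(x_i)=0$ for all pairs in the clique.
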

\begin{proof}
By the assumption for Leibnitz rule in Theorem \ref{thm-uniqueness} and Lemma \ref{lem-wedge-product}, we have
\begin{equation}
\begin{split}
&\delta(\rho\wedge\alpha)(x_0,x_1,\cdots,x_{k+1})\\
=&\delta\rho\wedge\alpha(x_0,x_1,\cdots,x_{k+1})+\rho\wedge\delta\alpha(x_0,x_1,\cdots,x_{k+1})\\
=&\delta\alpha(x_0,x_1,\cdots,x_{k+1}).
\end{split}
\end{equation}
Here $\delta\rho\wedge\alpha(x_0,x_1,\cdots,x_{k+1})=0$ because $\delta\rho(x_i,x_j)=\rho(x_j)-\rho(x_i)=0$ for any $i,j=0,1,\cdots,k+1$.
\end{proof}
We are now ready to prove Theorem \ref{thm-uniqueness}.
\begin{proof}[Proof of Theorem \ref{thm-uniqueness}] Let $\alpha\in A^k(G)$ and $\{x_0,x_1,\cdots,x_{k+1}\}\in C_{k+2}(G)$. Let $\rho$ be the function in Lemma \ref{lem-cut-off}. Then, by Lemma \ref{lem-cut-off} and Lemma \ref{lem-expansion}
\begin{equation}\label{eq-delta}
\begin{split}
&\delta\alpha(x_0,x_1,\cdots,x_{k+1})\\
=&\delta(\rho\wedge\alpha)(x_0,x_1,\cdots,x_{k+1})\\
=&\delta\left(\sum_{v_1,v_2,\cdots,v_k\in V(G)}(\rho\wedge\alpha)_{v_1v_2\cdots v_k}\wedge(d\chi^{v_1}\wedge d\chi^{v_2}\wedge\cdots\wedge d\chi^{v_k})\right)(x_0,x_1,\cdots,x_{k+1}).\\
\end{split}
\end{equation}
By Lemma \ref{lem-wedge-product},
\begin{equation}
\begin{split}
&(\rho\wedge\alpha)_{v_1v_2\cdots v_k}(x)\\
=&\rho\wedge\alpha(x,v_1,v_2,\cdots,v_k)\\
=&\frac{1}{k+1}(\rho(x)+\rho(v_1)+\cdots+\rho(v_k))\alpha(x,v_1,\cdots,v_k)\\
=&0
\end{split}
\end{equation}
if there is some $v_i$ with $d(v_i,x_j)\geq 2$ for any $j=0,1,2,\cdots,k+1$. Indeed, suppose that $d(v_1,x_j)\geq 2$ for any $j=0,1,2\cdots,k+1$. If $\alpha(x,v_1,\cdots,v_k)\neq 0$, then
$$d(x,v_1)=d(v_2,v_1)=\cdots=d(v_k,v_1)=1.$$
This implies that
$$\{x,v_1,v_2,\cdots,v_k\}\cap \{x_0,x_1,\cdots,x_{k+1}\}=\emptyset.$$
So,
\begin{equation}
\rho(x)+\rho(v_1)+\cdots+\rho(v_k)=0.
\end{equation}
Therefore
\begin{equation}
\begin{split}
&\{(v_1,v_2,\cdots,v_k)\ |\ (\rho\wedge\alpha)_{v_1v_2\cdots v_k}\not\equiv 0.\}\\
\subset& \{(v_1,v_2,\cdots,v_k)\ |\ d(v_i,x_j)\leq 1\ {\rm for\ some}\ j=0,1,2,\cdots,k+1.\}
\end{split}
\end{equation}
is a finite set because $G$ is locally finite. Then, by linearity of $\delta$,
\begin{equation}
\begin{split}
&\delta\alpha(x_0,x_1,\cdots,x_{k+1})\\
=&\left(\sum_{v_1,v_2,\cdots,v_k\in V(G)}\delta(\rho\wedge\alpha)_{v_1v_2\cdots v_k}\wedge(d\chi^{v_1}\wedge d\chi^{v_2}\wedge\cdots\wedge d\chi^{v_k})\right)(x_0,x_1,\cdots,x_{k+1})\\
=&\left(\sum_{v_1,v_2,\cdots,v_k\in V(G)}d(\rho\wedge\alpha)_{v_1v_2\cdots v_k}\wedge(d\chi^{v_1}\wedge d\chi^{v_2}\wedge\cdots\wedge d\chi^{v_k})\right)(x_0,x_1,\cdots,x_{k+1})\\
=&d\left(\sum_{v_1,v_2,\cdots,v_k\in V(G)}(\rho\wedge\alpha)_{v_1v_2\cdots v_k}\wedge(d\chi^{v_1}\wedge d\chi^{v_2}\wedge\cdots\wedge d\chi^{v_k})\right)(x_0,x_1,\cdots,x_{k+1})\\
=&d(\rho\wedge\alpha)(x_0,x_1,\cdots,x_{k+1})\\
=&d\alpha(x_0,x_1,\cdots,x_{k+1}).
\end{split}
\end{equation}
This completes the proof of the theorem.
\end{proof}

\end{document}